\newtheorem{thm}{Theorem}
\newtheorem{lem}{Lemma}
\newcommand{\expect}[1]{\mathbb{E}\left[#1\right]}
\newcommand{\defequiv}{\mbox{\raisebox{-.3ex}{$\overset{\vartriangle}{=}$}}}
\newcommand{\norm}[1]{||{#1}||}
\newcommand{\script}[1]{{{\cal{#1} }}}
\begin{document}

\title
  {A Simple Convergence Time Analysis of Drift-Plus-Penalty for Stochastic Optimization and Convex Programs}
\author{Michael J. Neely\\University of Southern California\\$\vspace{-.5in}$
\thanks{The author is with the  Electrical Engineering department at the University
of Southern California, Los Angeles, CA.} 
\thanks{This work is supported in part  by the NSF Career grant CCF-0747525.}
}

\markboth{}{Neely}

\maketitle

\begin{abstract}   
This paper considers the problem of minimizing the time average of a stochastic process subject to time average constraints on other processes.  A canonical example is minimizing average power in a data network subject to multi-user throughput constraints.  Another example is a (static) convex program. Under a Slater condition, the drift-plus-penalty algorithm is known to provide an $O(\epsilon)$ approximation to optimality with a convergence time of $O(1/\epsilon^2)$.  This paper proves the same result with a simpler technique and in a more general context that does not require the Slater condition. This paper also emphasizes application to basic convex programs, linear programs, and distributed optimization problems.
\end{abstract} 

\section{Introduction}

Fix $K$ as a positive integer.  
Consider a discrete time system that operates over time slots $t \in \{0, 1, 2, \ldots\}$.   Every slot $t$, the controller observes a \emph{random event} $\omega(t)$.  Assume that events $\omega(t)$ are elements in an abstract set $\Omega$, and that they are independent and identically distributed (i.i.d.) over slots.  The set $\Omega$ can have arbitrary (possibly uncountably infinite) cardinality. Every slot $t$, a system controller observes the current $\omega(t)$ and then chooses a \emph{decision vector} $y(t) = (y_0(t), y_1(t), \ldots, y_K(t))$ within an option set $\script{Y}(\omega(t)) \subseteq \mathbb{R}^{K+1}$ that possibly depends on $\omega(t)$.  That is, $\script{Y}(\omega(t))$ is the set of vector options available under the random event $\omega(t)$.  The sets $\script{Y}(\omega(t))$ are arbitrary and are only assumed to have a mild boundedness property (specified in Section \ref{section:algorithm}). 

The goal is to minimize the expected time average of the resulting $y_0(t)$ process subject to time average constraints on the $y_k(t)$ processes for $k \in \{1, \ldots, K\}$. 
Specifically, for integers $t>0$, and for each $k \in \{0, 1, \ldots, K\}$, define: 
\[ \overline{y}_k(t) \defequiv \frac{1}{t}\sum_{\tau=0}^{t-1} \expect{y_k(\tau)} \]
Let $c_1, \ldots, c_K$ be a given collection of real numbers. 
The goal is to solve the following stochastic optimization problem: 
\begin{eqnarray}
\mbox{Minimize:} & \limsup_{t\rightarrow\infty} \overline{y}_0(t) \label{eq:p1}  \\
\mbox{Subject to:} & \limsup_{t\rightarrow\infty} \overline{y}_k(t) \leq c_k \label{eq:p2} \\
& y(t) \in \script{Y}(\omega(t)) \: \: \forall t \in \{0, 1, 2, \ldots\} \label{eq:p3}
\end{eqnarray} 

Assume the problem is \emph{feasible}, so that it is possible to satisfy the constraints 
\eqref{eq:p2}-\eqref{eq:p3}.  Define $y_0^{opt}$ as the infimum value of the objective \eqref{eq:p1} over all algorithms that satisfy the constraints \eqref{eq:p2}-\eqref{eq:p3}.
The \emph{drift-plus-penalty} algorithm from \cite{sno-text} is known to satisfy constraints \eqref{eq:p2}-\eqref{eq:p3} and to ensure: 
\begin{equation} \label{eq:ta-1} 
\limsup_{t\rightarrow\infty} \overline{y}_0(t) \leq y_0^{opt} + \epsilon 
\end{equation} 
where $\epsilon>0$ is a parameter used in the algorithm.  This is done by defining \emph{virtual queues} $Q_k(t)$ for each constraint $k \in \{1, \ldots, K\}$ in  \eqref{eq:p2}: 
\begin{equation} \label{eq:q-update0} 
Q_k(t+1) = \max[Q_k(t) + y_k(t) - c_k, 0] 
\end{equation} 
where $y_k(t)$ acts as a \emph{virtual arrival process} and $c_k$ acts as a constant \emph{virtual service rate}.\footnote{In an actual queueing system, arrivals and service rates are always non-negative.  However, in this virtual queue, the $y_k(t)$ and $c_k$ values can possibly be negative.}  
The intuition behind \eqref{eq:q-update0} is that if $Q_k(t)$ is stable, the time average arrival rate must be less than or equal to the time average service rate, which implies the desired time average constraint \eqref{eq:p2}. 
Under an additional \emph{Slater condition}, it is also known that the drift-plus-penalty algorithm provides an $O(1/\epsilon)$ bound on the time average expected size of all virtual queues: 
\begin{equation} \label{eq:ta-2} 
 \limsup_{t\rightarrow\infty}\overline{Q}_k(t) \leq O(1/\epsilon) \: \: \forall k  \in \{1, \ldots, K\} 
 \end{equation} 
where $\overline{Q}_k(t)$ is defined for $t>0$ by: 
 \[ \overline{Q}_k(t) \defequiv \frac{1}{t}\sum_{\tau=0}^{t-1} \expect{Q_k(\tau)} \]
More recently, it was shown that the \emph{convergence time} required for the desired time averages to ``kick in'' is $O(1/\epsilon^2)$, provided that the Slater condition still holds (see Appendix C in \cite{dist-opt-arxiv}).  Specifically, an algorithm is said to produce an $O(\epsilon)$ approximation with convergence time $T$ if for all $t \geq T$ one has: 
\begin{eqnarray}
\overline{y}_0(t) &\leq& y_0^{opt} + O(\epsilon) \label{eq:conv1}  \\
\overline{y}_k(t) &\leq& c_k + O(\epsilon) \: \: \forall k \in \{1, \ldots, K\} \label{eq:conv2} 
\end{eqnarray}

\subsection{Contributions of the current paper} 

The current paper focuses on the issue of convergence time.  The main result is a proof that convergence time is $O(1/\epsilon^2)$ for general problems that have an associated \emph{Lagrange multiplier}.  It can be shown that a Lagrange multiplier exists whenever the Slater condition exists, but not vice-versa.  Hence the proof in the current paper is more general than the prior result \cite{dist-opt-arxiv} that uses a Slater condition.  To appreciate this distinction, note that a Slater condition is equivalent to assuming there exists a value $\delta>0$ and a decision policy under which \emph{all constraints can be satisfied with at least  $\delta$ slackness}:
\[ \limsup_{t\rightarrow\infty} \overline{y}_k(t) \leq c_k - \delta  \: \: \forall k \in \{1, \ldots, K\} \]
This Slater condition is impossible in many problems of interest.  For example, a problem with a time average equality constraint $\lim_{t\rightarrow\infty} \overline{x}(t) = c$ can be treated using two inequality constraints of the type \eqref{eq:p2}: 
\begin{eqnarray*}
\limsup_{t\rightarrow\infty} \overline{x}(t) &\leq& c \\
\limsup_{t\rightarrow\infty} [-\overline{x}(t)] &\leq& -c 
\end{eqnarray*}
However, it is  impossible for a Slater condition to exist with the above two inequality constraints. Indeed, that would require: 
\begin{eqnarray}
\limsup_{t\rightarrow\infty} \overline{x}(t) &\leq& c - \delta \label{eq:pc1}  \\
\limsup_{t\rightarrow\infty}[ -\overline{x}(t)] &\leq& -c - \delta \label{eq:pc2} 
\end{eqnarray}
Yet, \eqref{eq:pc2} implies: 
\begin{eqnarray*}
c + \delta &\leq& -\limsup_{t\rightarrow\infty} [-\overline{x}(t)]  \nonumber \\
&=& \liminf_{t\rightarrow\infty} \overline{x}(t) \nonumber \\
&\leq& \limsup_{t\rightarrow\infty} \overline{x}(t) \nonumber \\
 &\leq& c - \delta 
  \end{eqnarray*} 
 where the final inequality  follows from \eqref{eq:pc1}. This means that $c+\delta \leq c-\delta$, 
a contradiction when $\delta >0$. 

Another contribution of the current paper is the application of this stochastic result to standard (static) convex programs and linear programs.  Of course, static problems are a special case of stochastic problems. Nevertheless, this paper clearly illustrates that point, and shows that the drift-plus-penalty algorithm can be applied to convex programs and linear programs to produce an $\epsilon$-approximation with convergence time $O(1/\epsilon^2)$.  This was previously shown in \cite{neely-dist-comp} under a Slater condition.  A collection of simplified example problems of distributed optimization, similar to those presented in \cite{neely-dist-comp}, are given to demonstrate the method. 

\subsection{Applications} 

The problem \eqref{eq:p1}-\eqref{eq:p3} is useful in a variety of settings, including problems of stochastic network utility maximization \cite{neely-fairness-ton}\cite{neely-thesis}\cite{now} and problems of minimizing average power in a network subject to queue stability \cite{neely-energy-it}.  Indeed, the drift-plus-penalty technique was developed in \cite{neely-fairness-ton}\cite{neely-thesis}\cite{now}\cite{neely-energy-it} for use in these particular applications.  

As an example, consider a multi-user wireless downlink problem where random data arrivals $a_k(t)$ arrive to the base station every slot $t$, intended for different users $k \in \{1, \ldots, K\}$.  Suppose the network controller can observe the current channel state vector $S(t) = (S_1(t), \ldots, S_K(t))$, which specifies current conditions on the channel for each user.  The controller also observes 
the vector of new data arrivals $a(t) = (a_1(t), \ldots, a_K(t))$.  Let $\omega(t) = (S(t); a(t))$ be a concatenated vector with this channel and arrival information, and let $\Omega$ be the set of all possible $\omega(t)$ vectors. Let $p(t) = (p_1(t), \ldots, p_K(t))$ be the power used for transmission, chosen within some abstract set $\script{P}$ every slot $t$.   Let $\mu_k(p(t),S(t))$ be a function that specifies the transmission rate on channel $k$ under the power vector $p(t)$ and the channel state vector $S(t)$ \cite{neely-energy-it}. Define $r_k(t) = \mu_k(p(t),S(t))$.  The goal is to minimize total average power expenditure subject to ensuring the average transmission rate for each channel is greater than or equal to the arrival rate: 
\begin{eqnarray*}
\mbox{Minimize:} & \limsup_{t\rightarrow\infty} \sum_{k=1}^K\overline{p}_k(t) \\
\mbox{Subject to:} & \limsup_{t\rightarrow\infty} [\overline{a}_k(t) - \overline{r}_k(t)] \leq 0 \\
& p(t) \in \script{P} \: \: \forall t \in \{0, 1, 2, \ldots\} 
\end{eqnarray*}

This problem has the form \eqref{eq:p1}-\eqref{eq:p3} by defining: 
\begin{eqnarray}
y_0(t) &=& \sum_{k=1}^Kp_k(t) \nonumber \\
y_k(t) &=& a_k(t) - \mu_k(p(t), S(t)) \: \: \forall k \in \{1, \ldots, K\}  \label{eq:yk-example} \\
c_k &=& 0 \: \: \forall k \in \{1, \ldots, K\} \nonumber
\end{eqnarray}
and by defining $\script{Y}(\omega)$ for each $\omega=(S, a)  \in \Omega$ as the set of all $(y_0, y_1, \ldots, y_K) \in \mathbb{R}^{K+1}$ such that there is a vector $p \in \script{P}$ that satisfies: 
\begin{eqnarray*}
y_0 &=& \sum_{k=1}^Kp_k\\
y_k &=& a_k - \mu_k(p,S) \: \: \forall k \in \{1, \ldots, K\} 
\end{eqnarray*}
In this example, the virtual queue equations \eqref{eq:q-update0} reduce to the following for all $k \in \{1, \ldots, K\}$: 
\[ Q_k(t+1) = \max[Q_k(t) + a_k(t) - \mu_k(p(t), S(t)), 0] \]
This ``virtual queue'' corresponds to an \emph{actual network queue}, where $a_k(t)$ is the actual arriving data on slot $t$, and $\mu_k(p(t), S(t))$ is the actual transmission rate offered on slot $t$.

\subsection{Prior work} 

The drift method for queue stability was developed in \cite{tass-radio-nets}\cite{tass-server-allocation}, which resulted in \emph{max-weight} and \emph{backpressure} algorithms for data networks. 
The drift-plus-penalty method was developed for network utility maximization problems in \cite{neely-fairness-ton}\cite{neely-thesis} and energy optimization problems in \cite{neely-energy-it}. 
Generalized tutorial results are in \cite{sno-text}\cite{now}.  The works \cite{sno-text}\cite{now} prove that, under a Slater condition, the drift-plus-penalty algorithm gives an $O(\epsilon)$ approximation to optimality with an average queue size tradeoff of $O(1/\epsilon)$.  Recent work  
in \cite{dist-opt-arxiv} shows that convergence time is  $O(1/\epsilon^2)$ under a Slater condition. 
Application to convex programs are given in \cite{neely-dist-comp}, again under a Slater condition.  

Related work in \cite{atilla-fairness-ton} derives a similar algorithm for utility maximization in a wireless downlink via a different analysis that uses Lagrange multipliers.  Lagrange multiplier analysis was used in \cite{longbo-lagrange-tac} to improve queue bounds to $O(\log(1/\epsilon))$ in certain piecewise linear cases. Work in 
\cite{neely-energy-convergence-arxiv} demonstrates near-optimal convergence time of 
$O(\log(1/\epsilon)/\epsilon)$  for one-link problems with piecewise linearity.  
Improved convergence time bounds of $O(1/\epsilon)$ are recently shown in \cite{sucha-convergence-time} for deterministic problems with piecewise linearity assumptions.  Work in \cite{wei-convergence-admm} considers the special case of a deterministic convex program with linear constraints, and uses a different method for obtaining $O(1/\epsilon)$ convergence time.  The work \cite{wei-convergence-admm} also considers distributed implementation over a graph. 
While the works 
\cite{neely-energy-convergence-arxiv}\cite{sucha-convergence-time}\cite{wei-convergence-admm} demonstrate convergence time that is superior to the $O(1/\epsilon^2)$ result of the current paper, those results hold only for special case systems.

 The drift-plus-penalty algorithm is closely related to the dual subgradient algorithm for convex programs \cite{bertsekas-convex}.   
Related work in \cite{lin-shroff-cdc04} uses a dual subgradient approach for non-stochastic problems of network scheduling for utility maximization.  Network scheduling with stochastic approximation is considered in \cite{lee-stochastic-scheduling}.  A different \emph{primal-dual} approach is 
considered for network utility maximization in \cite{prop-fair-down}\cite{vijay-allerton02}\cite{stolyar-greedy}\cite{stolyar-gpd-gen}\cite{atilla-primal-dual-jsac}. 

\section{Algorithm and basic analysis} \label{section:algorithm} 

This section presents the basic results needed from \cite{sno-text}.

\subsection{Boundedness assumption} \label{section:boundedness} 

Assume there are non-negative constants $h_0, h_1, \ldots, h_K$ such that under any policy for making decisions and for any given slot $t$, the first moment of $y_0(t)$ and the 
second moments of $y_k(t)$ for $k \in \{1, \ldots, K\}$ satisfy: 
\begin{eqnarray}  
\expect{|y_0(t)|}     &\leq&     h_0 \label{eq:hk0} \\
 \expect{y_k(t)^2}   &\leq&    h_k \: \: \forall k \in \{1, \ldots, K\} \label{eq:hk1} 
 \end{eqnarray} 
That is, the first moment of $y_0(t)$ is uniformly bounded for all $t$, and the second moments of 
$y_k(t)$ for $k \in \{1, \ldots, K\}$ are also uniformly bounded.   

These boundedness conditions \eqref{eq:hk0}-\eqref{eq:hk1} are satisfied, for example, whenever there is a bounded set $\script{Y} \subseteq \mathbb{R}^{K+1}$ such that $\script{Y}(\omega) \subseteq \script{Y}$ for all $\omega \in \Omega$.  It can also hold when $y_k(t)$ is not necessarily bounded.  This is useful in the wireless downlink example with $y_k(t)=a_k(t) - \mu_k(p(t), S(t))$, as defined by \eqref{eq:yk-example}.  Suppose that $\mu_k(\cdot)$ always takes values in the bounded interval $[0, r_{max}]$ for some real number $r_{max}>0$.  In this case, $y_k(t)$ satisfies \eqref{eq:hk1} whenever $\expect{a_k(t)^2}$ is finite.
However, particular values of 
$y_k(t)$ can be arbitrarily large if the arrivals $a_k(t)$ can be arbitrarily large.  For example, if $a_k(t)$ is a Poisson process, it can take arbitrarily large values but has a finite second moment.

\subsection{Compactness assumption} \label{section:compactness} 

Assume that for all $\omega \in \Omega$, the set $\script{Y}(\omega)$ is a compact subset of $\mathbb{R}^{K+1}$ (recall that a subset is compact if it is closed and bounded).  This compactness assumption  is not crucial to the analysis,  but it simplifies exposition.\footnote{If $\script{Y}(\omega)$ is not compact, one can still obtain optimality results by assuming the drift-plus-penalty algorithm comes within an additive constant $C$ of minimizing the desired expression for all slots $t$.  This is called a \emph{$C$-additive approximation} \cite{sno-text}.}   Indeed, such compactness 
ensures that, given any $\omega \in \Omega$, there is always an optimal solution to problems of the following type: 
\begin{eqnarray*}
\mbox{Minimize:} & \sum_{k=0}^{K} w_k y_k \\
\mbox{Subject to:} & (y_0, \ldots, y_{K}) \in \script{Y}(\omega) 
\end{eqnarray*}
where $w_0, \ldots, w_{K}$ are a given set of real numbers.  The drift-plus-penalty algorithm will be shown to make decisions every slot $t$ according to such a minimization.

The sets $\script{Y}(\omega(t))$ are not required to have any additional structure beyond the boundedness and compactness assumptions specified in Sections \ref{section:boundedness} and \ref{section:compactness}.  In particular, the sets $\script{Y}(\omega(t))$ might be finite, infinite, convex, or non-convex.

\subsection{The set $\script{R}$ of all average vectors} 

Recall that random events $\omega(t)$ are i.i.d. over slots.  The distribution for $\omega(t)$ is possibly unknown.  Imagine observing $\omega(t)$ and randomly choosing vector $y(t)$ in the set $\script{Y}(\omega(t))$ according to a distribution that depends on $\omega(t)$. The expectation vector 
$\expect{y(t)}$ is with respect to the randomness of $\omega(t)$ and the conditional randomness of $y(t)$ given $\omega(t)$.  Define $\script{R}$ as the set of all expectation vectors $\expect{y(t)} = \expect{(y_0(t), \ldots, y_K(t))}$ that can be achieved, considering all  $\omega \in \Omega$ and all possible conditional distributions over the set $\script{Y}(\omega)$ given that $\omega(t)=\omega$.  A probabilistic mixture of two randomized choices is again a randomized choice, and so the set $\script{R}$ is a convex subset of $\mathbb{R}^{K+1}$.   The boundedness assumptions \eqref{eq:hk0}-\eqref{eq:hk1} further imply that  
$\script{R}$ is bounded.

Every slot $\tau \in \{0, 1, 2, \ldots \}$, a general algorithm chooses $y(\tau)$ as a (possibly random) vector in the set $\script{Y}(\omega(\tau))$ (with distribution that possibly depends on the observed history), and so $\expect{y(\tau)} \in \script{R}$ for all slots $\tau$.  Fix $t>0$. It follows that 
$\overline{y}(t)= \frac{1}{t}\sum_{\tau=0}^{t-1} \expect{y(\tau)}$ is a convex combination of vectors in $\script{R}$, and so it is also in $\script{R}$ (since $\script{R}$ is a convex set).  That is: 
\begin{equation} \label{eq:in-R} 
\overline{y}(t) \in \script{R} \: \: \forall t \in \{1, 2, 3, \ldots\} 
\end{equation} 

\subsection{Optimality} 

Define $\overline{\script{R}}$ as the closure of $\script{R}$.  Since $\script{R}$ is a bounded and convex subset of $\mathbb{R}^{K+1}$, the set $\overline{\script{R}}$ is a compact and convex subset of $\mathbb{R}^{K+1}$. Consider the problem: 
\begin{eqnarray}
\mbox{Minimize:} & y_0 \label{eq:static1}  \\
\mbox{Subject to:} & y_k \leq c_k \: \: \forall k \in \{1, \ldots, K\} \label{eq:static2}  \\
& (y_0, y_1, \ldots, y_K) \in \overline{\script{R}} \label{eq:static3} 
\end{eqnarray}
In \cite{sno-text} it is shown that the above problem  \eqref{eq:static1}-\eqref{eq:static3} is feasible if and only if the original stochastic optimization problem 
\eqref{eq:p1}-\eqref{eq:p3} is feasible.  Further, assuming feasibility, the problems \eqref{eq:static1}-\eqref{eq:static3}  and 
\eqref{eq:p1}-\eqref{eq:p3} have the same optimal objective 
value $y_0^{opt}$. 

Throughout this paper it is assumed that problem \eqref{eq:p1}-\eqref{eq:p3} is feasible, and hence
problem \eqref{eq:static1}-\eqref{eq:static3} is feasible.  Let $(y_0^{opt}, y_1^{opt} ,\ldots, y_K^{opt})$ be an optimal solution to \eqref{eq:static1}-\eqref{eq:static3}. Such an optimal solution exists because the problem 
\eqref{eq:static1}-\eqref{eq:static3} is feasible and the set $\overline{\script{R}}$ is compact.  This optimal solution must satisfy the constraints of problem \eqref{eq:static1}-\eqref{eq:static3}, and so: 
\begin{equation} \label{eq:go-to-zero} 
y_k^{opt} \leq c_k \: \: \forall k \in \{1, \ldots, K\} 
\end{equation}

\subsection{Lyapunov optimization} 

Define $Q(t) = (Q_1(t), \ldots, Q_K(t))$ as the vector of queue backlogs. The squared norm of the backlog vector is:
\[ \norm{Q(t)}^2 = \sum_{k=1}^KQ_k(t)^2 \]
Define $L(t) = \frac{1}{2}\norm{Q(t)}^2$, called a \emph{Lyapunov function}.  The drift-plus-penalty algorithm observes the current vector $Q(t)$ and random event $\omega(t)$ every slot $t$, and then 
makes a decision $y(t) \in \script{Y}(\omega(t))$ to greedily minimize a bound on the \emph{drift-plus-penalty} expression: 
\[ \Delta(t) + V y_0(t) \]
where $V$ is a positive weight that affects a performance tradeoff.  Setting $V = 1/\epsilon$ results in an $O(\epsilon)$ approximation to optimality \cite{sno-text}.  This fact is reviewed in the remainder of this section, as several of the key results are needed in the new convergence analysis of Section \ref{section:convergence}. 

To bound $\Delta(t)$, fix $k \in \{1, \ldots, K\}$, square the queue equation \eqref{eq:q-update0}, and use the fact that $\max[z,0]^2 \leq z^2$ to obtain: 
\[ Q_k(t+1)^2 \leq Q_k(t)^2 + (y_k(t)-c_k)^2 + 2Q_k(t)(y_k(t)-c_k) \]
Summing the above over $k \in \{1, \ldots, K\}$ and dividing by $2$ gives: 
\[ \Delta(t) \leq B(t) +  \sum_{k=1}^KQ_k(t)(y_k(t) - c_k) \]
where $B(t)$ is defined: 
\begin{equation} \label{eq:Bt} 
B(t) = \frac{1}{2}\sum_{k=1}^K (y_k(t) - c_k)^2 
\end{equation} 
Adding $Vy_0(t)$ to both sides gives the following bound: 
\begin{equation} \label{eq:dpp} 
 \Delta(t) + Vy_0(t) \leq B(t) + Vy_0(t) + \sum_{k=1}^KQ_k(t)(y_k(t) - c_k) 
 \end{equation} 
Every slot $t$, the drift-plus-penalty algorithm observes $Q(t), \omega(t)$ and chooses $(y_0(t), y_1(t), \ldots, y_K(t))$ in the set  $\script{Y}(\omega(t))$ to minimize the last two terms on the right-hand-side of 
\eqref{eq:dpp}. 

\subsection{Drift-plus-penalty algorithm} 

Initialize $Q_k(0)=0$ for all $k \in \{1, \ldots, K\}$.  Perform the following steps every slot $t \in \{0, 1, 2, \ldots\}$: 
\begin{itemize} 
\item Observe $Q(t) = (Q_1(t), \ldots, Q_K(t))$ and $\omega(t)$, and choose $(y_0(t), \ldots, y_K(t)) \in \script{Y}(\omega(t))$ to minimize: 
\begin{equation} \label{eq:alg-choice} 
Vy_0(t) + \sum_{k=1}^KQ_k(t)y_k(t) 
\end{equation} 
\item Update queues $Q_k(t)$ for $k \in \{1, \ldots, K\}$ via: 
\begin{equation} \label{eq:q-update} 
Q_k(t+1) = \max[Q_k(t) + y_k(t) - c_k, 0] 
\end{equation} 
\end{itemize} 

A key feature of this algorithm is that it reacts to the observed state $\omega(t)$, and 
does not require knowledge of the probability distribution associated with $\omega(t)$.  Notice that once the queue vector $Q(t)$ is observed on slot $t$, its components act as known weights in the minimization of \eqref{eq:alg-choice}.  Hence, this minimization indeed has the form specified in Section \ref{section:compactness}.  Specifically, every slot a vector $y(t) \in \script{Y}(\omega(t))$ is chosen to minimize a linear function of the components $y_0(t), y_1(t), \ldots, y_K(t)$.  Complexity of this decision
depends on the structure of the sets $\script{Y}(\omega(t))$.  If these sets consist of a finite and small number of points, the decision amounts to testing each option and choosing the one with the least weighted sum.  The decision can be complex if the sets $\script{Y}(\omega(t))$ consist of a finite but large number of points, or if these sets are infinite but non-convex.   

For simplicity, it is assumed throughout that $y(t)$ is chosen to exactly minimize the expression 
\eqref{eq:alg-choice}  (this is possible via the compactness assumption of Section \ref{section:compactness}).  Similar analytical results can be obtained under the weaker assumption 
that $y(t)$ comes  within an additive constant of minimizing \eqref{eq:alg-choice}, 
called a \emph{$C$-additive approximation} (see \cite{sno-text}).

\subsection{Constraint satisfaction via queue stability} 

The queue backlog gives a simple bound on constraint violation.  Indeed, for all slots $\tau \in \{0, 1, 2, \ldots\}$ one has from \eqref{eq:q-update} and the fact that $\max[z, 0] \geq z$: 
\[ Q_k(\tau+1) \geq Q_k(\tau) + y_k(\tau) - c_k \]
Thus: 
\[ Q_k(\tau+1) - Q_k(\tau) \geq y_k(\tau) - c_k \]
Summing over $\tau \in \{0, 1, \ldots, t-1\}$ for some integer $t>0$ gives: 
\[ Q_k(t) - Q_k(0) \geq \sum_{\tau=0}^{t-1} y_k(\tau)  - tc_k \]
Dividing by $t$ and using the fact that $Q_k(0)=0$ gives:  
\[ \frac{Q_k(t)}{t} \geq \frac{1}{t}\sum_{\tau=0}^{t-1} y_k(\tau) - c_k \]
Taking expectations gives: 
\begin{equation*} 
\frac{\expect{Q_k(t)}}{t} \geq \overline{y}_k(t) - c_k 
\end{equation*}
Rearranging terms gives the desired constraint violation bound: 
\begin{equation} \label{eq:violation} 
\overline{y}_k(t) \leq c_k + \frac{\expect{Q_k(t)}}{t} 
\end{equation}  
It follows that the desired constraints \eqref{eq:p2} hold if all queues $k \in \{1, \ldots, K\}$ satisfy: 
\begin{equation} \label{eq:mean-rate-stable} 
 \lim_{t\rightarrow\infty} \frac{\expect{Q_k(t)}}{t} = 0 
 \end{equation} 
A queue that satisfies \eqref{eq:mean-rate-stable}  is said to be \emph{mean rate stable} \cite{sno-text}.

\subsection{Objective function analysis} 

Fix $\tau \in \{0, 1, 2, \ldots\}$. 
Because the drift-plus-penalty decision minimizes the last two terms on the right-hand-side of the drift-plus-penalty bound \eqref{eq:dpp},  one has: 
\begin{eqnarray} 
\Delta(\tau) + Vy_0(\tau) &\leq& B(\tau) + Vy_0^*(\tau) \nonumber \\
&& + \sum_{k=1}^KQ_k(\tau)(y_k^*(\tau) - c_k) \label{eq:the-policy} 
\end{eqnarray} 
for all vectors $(y_0^*(\tau), \ldots, y_K^*(\tau)) \in \script{Y}(\omega(\tau))$, including vectors that are chosen \emph{randomly} over $\script{Y}(\omega(\tau))$.  Fix a vector $(y_0^*, \ldots, y_K^*) \in \script{R}$. Let $y^*(\tau)=(y_0^*(\tau), \ldots, y_K^*(\tau))$ be chosen as a random function of $\omega(t)$ according to a conditional distribution that yields expectation $\expect{y^*(\tau)} = (y_0^*, \ldots, y_K^*)$, but with conditional decisions that are independent of history.  Since $\omega(\tau)$ is itself independent of history, it follows that for all $k \in \{1, \ldots, K\}$, $y_k(\tau)$ is independent of $Q_k(\tau)$, and: 
\begin{equation} \label{eq:product} 
 \expect{y_k(\tau)Q_k(\tau)} = \expect{y_k(\tau)}\expect{Q_k(\tau)} = y_k^*\expect{Q_k(\tau)}  
 \end{equation} 
Taking expectations of \eqref{eq:the-policy} (assuming $y^*(\tau)$ is this randomized policy) 
and substituting \eqref{eq:product} gives: 
\begin{eqnarray}
 \hspace{-.2in} \expect{\Delta(\tau)} + V\expect{y_0(\tau)} &\leq& \expect{B(\tau)} + Vy_0^* \nonumber \\
 && + \sum_{k=1}^K\expect{Q_k(\tau)}(y_k^*-c_k) \label{eq:sub} 
 \end{eqnarray}
 Let $B\geq0$ be a finite constant that satisfies the following for all slots $\tau$: 
 \begin{equation} \label{eq:B} 
 \expect{B(\tau)} \leq B 
 \end{equation} 
 Such a constant $B$ exists by the second moment boundedness assumption \eqref{eq:hk1}. Substituting $B$ into \eqref{eq:sub} gives: 
 \begin{eqnarray*}
 \expect{\Delta(\tau)} + V\expect{y_0(\tau)} &\leq& B + Vy_0^* \nonumber \\
 && + \sum_{k=1}^K\expect{Q_k(\tau)}(y_k^*-c_k) 
 \end{eqnarray*}
 
 The above inequality holds for all $(y_0^*, \ldots, y_K^*) \in \script{R}$.  Take a limit as $(y_0^*, \ldots, y_K^*)$ approaches the point $(y_0^{opt}, \ldots, y_K^{opt}) \in \overline{\script{R}}$ to obtain: 
\[ \expect{\Delta(\tau)} + V\expect{y_0(\tau)} \leq B + Vy_0^{opt} + \sum_{k=1}^K \expect{Q_k(\tau)}(y_k^{opt} - c_k) \]
Substituting \eqref{eq:go-to-zero} into the right-hand-side of the above inequality gives: 
\begin{equation} \label{eq:utility-dpp} 
\expect{\Delta(\tau)} + V\expect{y_0(\tau)} \leq B + Vy_0^{opt} 
\end{equation} 
The inequality \eqref{eq:utility-dpp} holds for all slots $\tau \in \{0, 1, 2, \ldots\}$.   Fix $t>0$.  Summing \eqref{eq:utility-dpp}  over $\tau\in\{0, 1, \ldots, t-1\}$ gives: 
\[ \expect{L(t)} - \expect{L(0)} + V\sum_{\tau=0}^{t-1}\expect{y_0(\tau)} \leq (B + Vy_0^{opt})t \]
Dividing by $t$ and using the fact that $\expect{L(0)}=0$ gives: 
\begin{equation} \label{eq:re-use} 
 \frac{\expect{L(t)}}{t} +  V\overline{y}_0(t) \leq B + Vy_0^{opt} 
 \end{equation} 
Dividing by $V$ and using $\expect{L(t)} \geq 0$ gives: 
\begin{equation} \label{eq:y-0-bound} 
\overline{y}_0(t) \leq y_0^{opt} + B/V 
\end{equation} 
That is, \eqref{eq:y-0-bound} ensures that for all slots $t>0$, the time average expectation 
$\overline{y}_0(t)$ is at most $O(1/V)$ larger than the optimal objective function value $y_0^{opt}$.  Fix $\epsilon>0$.  Using the parameter $V = 1/\epsilon$ gives an $O(\epsilon)$ approximation to optimal utility. 

It remains to show that the desired constraints are also satisfied.  If a Slater assumption holds, it can be shown that queue averages are $O(1/\epsilon)$. The Slater assumption also ensures convergence time is $O(1/\epsilon^2)$ \cite{dist-opt-arxiv}.  The next subsection presents a new analysis to develop $O(1/\epsilon^2)$ convergence time \emph{without} the Slater assumption. 

\section{Convergence time analysis} \label{section:convergence} 

\subsection{Lagrange multipliers} 

Assume the problem \eqref{eq:static1}-\eqref{eq:static3} is feasible.  
Since this problem is convex, a hyperplane in 
$\mathbb{R}^{K+1}$ exists that passes through the point $(y_0^{opt}, c_1, \ldots, c_K)$ and that contains the set $\overline{\script{R}}$ on one side \cite{bertsekas-convex}.  Specifically, there are non-negative values $\gamma_0, \gamma_1, \ldots, \gamma_K$ such that: 
\begin{equation*} 
\gamma_0y_0 + \sum_{k=1}^K \gamma_k y_k \geq \gamma_0y_0^{opt} + \sum_{k=1}^K \gamma_k c_k \: \: \forall (y_0, \ldots, y_K) \in \overline{\script{R}} 
\end{equation*} 
The hyperplane is said to be \emph{non-vertical} if $\gamma_0\neq 0$ \cite{bertsekas-convex}.  If the hyperplane is non-vertical, one can divide the above inequality by $\gamma_0$, define $\mu_k = \gamma_k/\gamma_0$ for all $k \in \{1, \ldots, K\}$, and conclude: 
\begin{equation} \label{eq:exist-lm} 
y_0 + \sum_{k=1}^K \mu_k y_k \geq y_0^{opt} + \sum_{k=1}^K \mu_k c_k \: \: \forall (y_0, \ldots, y_K) \in \overline{\script{R}} 
\end{equation} 
The non-negative vector $(\mu_1, \ldots, \mu_K)$ in \eqref{eq:exist-lm} is called a \emph{Lagrange multiplier vector}.  A Lagrange multiplier vector that satisfies \eqref{eq:exist-lm} exists whenever the separating hyperplane is non-vertical.  It can be shown that the separating hyperplane is non-vertical whenever a Slater condition holds.  Such a non-vertical hyperplane also exists in more general situations without a Slater condition (see ``regularity conditions'' specified in \cite{bertsekas-convex}).   Thus, the assumption that a Lagrange multiplier vector exists is a mild assumption. 

\subsection{Bounding the violations} 

Assume a (non-negative) Lagrange multiplier vector $(\mu_1, \ldots, \mu_K)$ exists so 
that \eqref{eq:exist-lm} holds.  Fix $t>0$.  Recall that \eqref{eq:in-R} ensures $\overline{y}(t) = (\overline{y}_0(t), \ldots, \overline{y}_K(t)) \in \script{R}$.  Since $\script{R} \subseteq\overline{\script{R}}$, by \eqref{eq:exist-lm} one has: 
\[    \overline{y}_0(t) + \sum_{k=1}^K \mu_k \overline{y}_k(t) \geq y_0^{opt} + \sum_{k=1}^K \mu_k c_k \]
Rearranging the above gives: 
\begin{eqnarray} 
y_0^{opt} - \overline{y}_0(t) &\leq& \sum_{k=1}^K \mu_k (\overline{y}_k(t) - c_k) \nonumber \\
&\leq& \sum_{k=1}^K \mu_k \frac{\expect{Q_k(t)}}{t}  \label{eq:here} 
\end{eqnarray}
where the final inequality holds by \eqref{eq:violation}. 

On the other hand, one has by \eqref{eq:re-use}: 
\begin{eqnarray}
\frac{\expect{L(t)}}{t} &\leq& B + V(y_0^{opt} - \overline{y}_0(t)) \nonumber \\
&\leq& B + V\sum_{k=1}^K\mu_k\frac{\expect{Q_k(t)}}{t} \label{eq:here2} \\
&\leq& B + \frac{V}{t}\norm{\mu} \cdot \norm{\expect{Q(t)}} \label{eq:here3} 
\end{eqnarray} 
where \eqref{eq:here2} is obtained by substituting \eqref{eq:here}, and \eqref{eq:here3} is due to the 
fact that the dot product of two vectors is less than or equal to the product of their norms.  Substituting the definition  $L(t) = \frac{1}{2}\norm{Q(t)}^2$ in the left-hand-side of \eqref{eq:here3} gives: 
\[ \frac{1}{2t}\expect{\norm{Q(t)}^2} \leq B + \frac{V}{t} \norm{\mu} \cdot \norm{\expect{Q(t)}} \]
Since $\expect{\norm{Q(t)}^2} \geq \norm{\expect{Q(t)}}^2$, one has: 
\[ \frac{1}{2t} \norm{\expect{Q(t)}}^2 \leq B + \frac{V}{t} \norm{\mu} \cdot \norm{\expect{Q(t)}} \]
Therefore: 
\[ \norm{\expect{Q(t)}}^2 - 2V\norm{\mu} \cdot \norm{\expect{Q(t)}} - 2Bt \leq 0 \]
Define $x = \norm{\expect{Q(t)}}$, $b = -2V\norm{\mu}$, $c=-2Bt$.  Then: 
\begin{equation} \label{eq:quadratic} 
 x^2 + bx + c \leq 0 
 \end{equation} 
The largest value of $x$ that satisfies \eqref{eq:quadratic} is equal to the largest root of the quadratic equation $x^2 + bx + c=0$, and so: 
\[ x \leq \frac{-b + \sqrt{b^2 - 4c}}{2} = V\norm{\mu} + \sqrt{V^2\norm{\mu}^2 + 2Bt} \]
Therefore, for all $t>0$ one has: 
\[ \norm{\expect{Q(t)}} \leq V\norm{\mu} + \sqrt{V^2\norm{\mu}^2 + 2Bt} \]
It follows from \eqref{eq:violation} that for all $k \in \{1, \ldots, K\}$ the constraint violations satisfy: 
\begin{eqnarray}
\overline{y}_k(t) &\leq& c_k + \frac{\expect{Q_k(t)}}{t} \nonumber \\
&\leq& c_k +  \frac{\norm{\expect{Q(t)}}}{t}  \nonumber \\
&\leq& c_k + \frac{V\norm{\mu} + \sqrt{V^2 \norm{\mu}^2 + 2Bt}}{t} \label{eq:follow2} 
\end{eqnarray}
This leads to the following theorem. 

\begin{thm} \label{thm:performance} Fix $\epsilon>0$ and define $V = 1/\epsilon$.  If the problem \eqref{eq:p1}-\eqref{eq:p3} is feasible and the Lagrange multiplier assumption \eqref{eq:exist-lm} holds, then for all $t \geq 1/\epsilon^2$ one has:  
\begin{eqnarray}
\overline{y}_0(t) &\leq& y_0^{opt} + O(\epsilon)  \label{eq:thm1} \\
\overline{y}_k(t) &\leq& c_k + O(\epsilon) \: \: \forall k \in \{1, \ldots, K\} \label{eq:thm2} 
\end{eqnarray}
and so the drift-plus-penalty algorithm with $V=1/\epsilon$ provides an $O(\epsilon)$ approximation with convergence time $O(1/\epsilon^2)$. 
\end{thm}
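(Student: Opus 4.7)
The proof is essentially a plug-in verification: all the hard work has already been done in the derivations leading up to the theorem statement, so the plan is to combine the two main inequalities \eqref{eq:y-0-bound} and \eqref{eq:follow2} with the choice $V = 1/\epsilon$ and the restriction $t \geq 1/\epsilon^2$, and read off the asymptotic constants.

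For \eqref{eq:thm1}, I would simply invoke \eqref{eq:y-0-bound}, which gives $\overline{y}_0(t) \leq y_0^{opt} + B/V$ for every $t > 0$. Substituting $V = 1/\epsilon$ yields $\overline{y}_0(t) \leq y_0^{opt} + B\epsilon$, so the $O(\epsilon)$ objective bound actually holds for all $t \geq 1$, not just $t \geq 1/\epsilon^2$. No Lagrange multiplier is needed for this direction; it comes purely from the drift-plus-penalty analysis of Section II.

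For \eqref{eq:thm2}, I would start from \eqref{eq:follow2}, which gives
\[
\overline{y}_k(t) \leq c_k + \frac{V\norm{\mu} + \sqrt{V^2 \norm{\mu}^2 + 2Bt}}{t}.
\]
Applying the elementary inequality $\sqrt{a+b} \leq \sqrt{a} + \sqrt{b}$ to the square root splits the right-hand side into three terms of the form $V\norm{\mu}/t$, $V\norm{\mu}/t$, and $\sqrt{2B/t}$. Plugging in $V = 1/\epsilon$ and $t \geq 1/\epsilon^2$, each of the first two terms is bounded by $\norm{\mu}\epsilon$, while the square-root term is bounded by $\sqrt{2B}\,\epsilon$. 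Summing gives $\overline{y}_k(t) \leq c_k + (2\norm{\mu} + \sqrt{2B})\epsilon$, which is the desired $O(\epsilon)$ constraint violation bound.

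I do not expect any genuine obstacle: the Lagrange multiplier assumption has already been used upstream to derive \eqref{eq:follow2}, and the threshold $t \geq 1/\epsilon^2$ is exactly the regime in which the $V\norm{\mu}/t$ and $\sqrt{B/t}$ contributions both collapse to $O(\epsilon)$. The only mild subtlety worth flagging is that the constant hidden in $O(\epsilon)$ depends on $\norm{\mu}$ and $B$, and is therefore problem-dependent through the Lagrange multiplier vector — but this is already implicit in the hypothesis \eqref{eq:exist-lm}.
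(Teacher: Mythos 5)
Your proposal is correct and follows essentially the same route as the paper: both parts are plug-in verifications of \eqref{eq:y-0-bound} and \eqref{eq:follow2} with $V=1/\epsilon$ and $t\ge 1/\epsilon^2$. The only cosmetic difference is that you split the square root via $\sqrt{a+b}\le\sqrt{a}+\sqrt{b}$ to get the constant $2\norm{\mu}+\sqrt{2B}$, whereas the paper keeps it intact to get $\norm{\mu}+\sqrt{\norm{\mu}^2+2B}$; both yield the same $O(\epsilon)$ conclusion.
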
 

\begin{proof} 
Inequality \eqref{eq:thm1} holds from \eqref{eq:y-0-bound} and the fact that $B/V = B\epsilon = O(\epsilon)$. Inequality \eqref{eq:thm2} holds from 
\eqref{eq:follow2} and the fact that: 
\begin{eqnarray*}
\frac{V\norm{\mu} + \sqrt{V^2\norm{\mu}^2 + 2Bt}}{t} &=& \frac{\norm{\mu}}{\epsilon t} + \sqrt{\frac{\norm{\mu}^2}{\epsilon^2 t^2} + \frac{2B}{t}} \\
&\leq& \norm{\mu}\epsilon + \sqrt{\norm{\mu}^2 \epsilon^2 + 2B\epsilon^2} \\
&=& \norm{\mu}\epsilon + \epsilon \sqrt{\norm{\mu}^2  + 2B} \\
&=& O(\epsilon) 
\end{eqnarray*}
\end{proof} 

\section{Equality constraints}  \label{section:equality} 

A similar analysis can be used to treat problems with explicit equality constraints.  Specifically, consider choosing a vector $h(t) = (y_0(t), y_1(t), \ldots, y_K(t), w_1(t), \ldots, w_M(t))$ in a set $\script{H}(\omega(t))$ to solve: 
\begin{eqnarray} 
\mbox{Minimize:} & \limsup_{t\rightarrow\infty} \overline{y}_0(t) \label{eq:equality1} \\
\mbox{Subject to:} & \limsup_{t\rightarrow\infty} \overline{y}_k(t) \leq c_k \: \: \forall k \in \{1, \ldots, K\} \label{eq:equality2} \\
& \lim_{t\rightarrow\infty} \overline{w}_i(t) = d_i \: \: \forall i \in \{1, \ldots, M\} \label{eq:equality3} \\
& h(t) \in \script{H}(\omega(t)) \: \: \forall t \in \{0, 1, 2, \ldots\} \label{eq:equality4} 
\end{eqnarray}
where $c_1, \ldots, c_K$ and $d_1, \ldots, d_M$ are given real numbers. One approach is to change each inequality constraint \eqref{eq:equality3} into two inequality constraints: 
\begin{eqnarray*}
\limsup_{t\rightarrow\infty} \overline{w}_i(t) &\leq& d_i \\
\limsup_{t\rightarrow\infty} [-\overline{w}_i(t)] &\leq& -d_i 
\end{eqnarray*}
This would involve two virtual queues for each $i \in \{1, \ldots, M\}$.  A notationally easier method is to simply change the structure of the virtual queue for equality constraints $i \in \{1, \ldots, M\}$ as follows \cite{sno-text}: 
\begin{equation} \label{eq:z-update} 
Z_i(t+1) = Z_i(t) + w_i(t) - d_i \: \: \forall i \in \{1, \ldots, M\} 
\end{equation}
The inequality constraints \eqref{eq:equality2} have the same virtual queues from before: 
\begin{equation} \label{eq:same-q-update} 
Q_k(t+1) = \max[Q_k(t) + y_k(t) - c_k , 0] 
\end{equation} 
The resulting algorithm is as follows:  Initialize $Z_i(0)=Q_k(0)=0$ for all $i \in \{1, \ldots, M\}$ and $k \in \{1, \ldots, K\}$.  Every slot $t \in \{0, 1, 2, \ldots\}$ do: 
\begin{itemize} 
\item Observe $Q_1(t), \ldots, Q_K(t)$ and $Z_1(t), \ldots, Z_M(t)$ and $\omega(t)$ and choose $h(t) \in \script{H}(\omega(t))$ to minimize: 
\[ Vy_0(t) + \sum_{k=1}^KQ_k(t)y_k(t) + \sum_{i=1}^M Z_i(t)w_i(t) \]

\item Update $Q_k(t)$ for $k \in \{1, \ldots, K\}$ and $Z_i(t)$ for $i \in \{1, \ldots, M\}$ via \eqref{eq:same-q-update} and \eqref{eq:z-update}. 
\end{itemize} 

The analysis of this scenario with equality constraints is similar and is omitted for brevity (see \cite{sno-text}). 

\section{Convex programs} 

Fix $N$ as a positive integer.  
Consider the problem of finding a vector $x=(x_1, \ldots, x_N) \in \mathbb{R}^{N}$ to solve: 
\begin{eqnarray}
\mbox{Minimize:} & f(x) \label{eq:cp1}  \\
\mbox{Subject to:} & g_k(x) \leq c_k \: \: \forall k \in \{1, \ldots, K\} \label{eq:cp2}  \\
& x \in \script{X} \label{eq:cp3} 
\end{eqnarray}
where $\script{X}$ is a convex and compact subset of $\mathbb{R}^N$, functions $f(x)$, $g_1(x), \ldots, g_K(x)$ are continuous and 
convex functions over $x \in \script{X}$, and $c_1, \ldots, c_K$ are given real numbers. 
The problem \eqref{eq:cp1}-\eqref{eq:cp3} is a \emph{convex program}. Assume the problem is feasible, so that there exists a vector that satisfies the constraints \eqref{eq:cp2}-\eqref{eq:cp3}.  The compactness and continuity assumptions ensure there is an optimal solution $x^*\in \script{X}$ that solves the problem 
\eqref{eq:cp1}-\eqref{eq:cp3}.  Define $f^* = f(x^*)$ as the optimal objective function value. 

This convex program is equivalent to a problem of the form \eqref{eq:p1}-\eqref{eq:p3}, and hence can be solved by the drift-plus-penalty method \cite{neely-dist-comp}.  To see this, define $\script{Y}$ as the set of all $(y_0, y_1, \ldots, y_K)$ vectors in $\mathbb{R}^{K+1}$ such that there exists a vector $x \in \script{X}$ that satisfies: 
\begin{eqnarray*}
y_0 &=& f(x) \\
y_k &=& g_k(x) \: \: \forall k \in \{1, \ldots, K\} 
\end{eqnarray*}
Consider a system defined over slots $t \in \{0, 1, 2, \ldots\}$.  Every slot $t$, a controller chooses a vector $x(t) = (x_1(t), \ldots, x_N(t))$ in the (deterministic) set $\script{X}$.  Define: 
\begin{eqnarray*}
y_0(t) &=& f(x(t)) \\
y_k(t) &=& g_k(x(t)) \: \: \forall k \in \{1, \ldots, K\}
\end{eqnarray*}
The goal is to choose $x(t)$ over slots to solve: 
\begin{eqnarray}
\mbox{Minimize:} & \limsup_{t\rightarrow\infty} \overline{y}_0(t) \label{eq:trans1} \\
\mbox{Subject to:} & \limsup_{t\rightarrow\infty} \overline{y}_k(t) \leq c_k \: \: \forall k \in \{1, \ldots, K\} \label{eq:trans2}  \\
& x(t) \in \script{X} \: \: \forall t \in \{0, 1, 2, \ldots\} \label{eq:trans3} 
\end{eqnarray}

\begin{lem} \label{lem:jensen} If $\{x(t)\}_{t=0}^{\infty}$ is a random or deterministic process that satisfies $x(t) \in \script{X}$ for all $t$, then:

a) For all $t>0$, one has $\frac{1}{t}\sum_{\tau=0}^{t-1} x(\tau) \in \script{X}$, and: 
\begin{eqnarray*}
f\left(\frac{1}{t}\sum_{\tau=0}^{t-1} x(\tau)\right) &\leq& \frac{1}{t}\sum_{\tau=0}^{t-1}y_0(\tau) \\
g_k\left(\frac{1}{t}\sum_{\tau=0}^{t-1} x(\tau)\right) &\leq& \frac{1}{t}\sum_{\tau=0}^{t-1} y_k(\tau) \: \: \forall k \in \{1, \ldots, K\} 
\end{eqnarray*}
 
 b) For all $t>0$, $\overline{x}(t) \in \script{X}$, and: 
\begin{eqnarray*}
f(\overline{x}(t)) &\leq& \overline{y}_0(t) \\
g_k(\overline{x}(t)) &\leq& \overline{y}_k(t) \: \: \forall k \in \{1, \ldots, K\} 
\end{eqnarray*}
\end{lem}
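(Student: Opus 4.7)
The plan is to prove both parts by combining the convexity/compactness of $\script{X}$ with two applications of Jensen's inequality. The lemma really splits into a membership statement and an inequality statement, so I will handle them in that order.

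For part (a), the membership $\frac{1}{t}\sum_{\tau=0}^{t-1} x(\tau) \in \script{X}$ follows directly from the fact that $\script{X}$ is convex: the sum is a convex combination (with uniform weights $1/t$) of the points $x(0), \ldots, x(t-1)$, each of which lies in $\script{X}$. For the function inequalities, I would simply invoke Jensen's inequality applied to the convex functions $f$ and $g_k$ over the same uniform convex combination, yielding
\begin{equation*}
f\!\left(\tfrac{1}{t}\sum_{\tau=0}^{t-1} x(\tau)\right) \leq \tfrac{1}{t}\sum_{\tau=0}^{t-1} f(x(\tau)) = \tfrac{1}{t}\sum_{\tau=0}^{t-1} y_0(\tau),
\end{equation*}
and analogously for each $g_k$. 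In the random case, the same inequality holds sample-path-wise.

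For part (b), the setup is essentially part (a) applied to expectations, but I need one extra step. First I would show $\expect{x(\tau)} \in \script{X}$ for every $\tau$. This uses that $\script{X}$ is closed and convex, hence equal to the intersection of all closed half-spaces containing it: for any vector $a \in \mathbb{R}^N$, we have $a^\top x(\tau) \leq \sup_{x \in \script{X}} a^\top x$ almost surely, so taking expectations gives $a^\top \expect{x(\tau)} \leq \sup_{x \in \script{X}} a^\top x$, and hence $\expect{x(\tau)}$ lies in every closed half-space containing $\script{X}$, i.e., $\expect{x(\tau)} \in \script{X}$. Then $\overline{x}(t) = \frac{1}{t}\sum_{\tau=0}^{t-1}\expect{x(\tau)} \in \script{X}$ again by convexity.

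Finally, I would chain two Jensen inequalities to obtain the function bounds. By convexity of $f$ over the uniform average,
\begin{equation*}
f(\overline{x}(t)) = f\!\left(\tfrac{1}{t}\sum_{\tau=0}^{t-1} \expect{x(\tau)}\right) \leq \tfrac{1}{t}\sum_{\tau=0}^{t-1} f(\expect{x(\tau)}),
\end{equation*}
and by Jensen applied to the random variable $x(\tau)$ for each $\tau$, $f(\expect{x(\tau)}) \leq \expect{f(x(\tau))} = \expect{y_0(\tau)}$. Combining gives $f(\overline{x}(t)) \leq \overline{y}_0(t)$, and the identical argument with $g_k$ in place of $f$ gives the constraint inequalities. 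The only mildly non-trivial point is the claim $\expect{x(\tau)} \in \script{X}$; everything else is a direct invocation of convexity and Jensen's inequality, so I do not anticipate any real obstacle.
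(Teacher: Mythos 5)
Your proof is correct and uses the same ingredients as the paper's: convexity of $\script{X}$ for the membership claims and Jensen's inequality (applied twice in part (b)) for the function bounds. The only cosmetic differences are that you chain the two Jensen steps in the opposite order in part (b) --- first passing $f$ and $g_k$ through each expectation $\expect{x(\tau)}$ and then through the time average, rather than taking expectations of the part (a) inequalities --- and that you supply a supporting-half-space proof of the fact $\expect{x(\tau)} \in \script{X}$, which the paper simply asserts.
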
 

\begin{proof} 
Part (a) follows immediately from  convexity of $\script{X}$ and Jensen's inequality on the convex functions $f(x)$ and $g_k(x)$.  Part (b) follows by taking expectations of the inequalities in part (a) and again using Jensen's inequality.  Formally, it also uses the fact that if $X$ is a random vector that takes values in a convex set $\script{X}$, and if $\expect{X}$ is finite, then $\expect{X} \in \script{X}$. 
\end{proof} 

\begin{lem} If $x^*$ is an optimal solution to the convex program \eqref{eq:cp1}-\eqref{eq:cp3}, then $x(t)=x^*$ for all $t \in \{0, 1, 2, \ldots\}$  is an optimal solution to \eqref{eq:trans1}-\eqref{eq:trans3}.  Further, the optimal objective function value in both problems \eqref{eq:cp1}-\eqref{eq:cp3} and \eqref{eq:trans1}-\eqref{eq:trans3} is $f^*$. 
\end{lem}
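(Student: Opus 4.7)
The plan is to prove the lemma in two parts. First, I would verify directly that the constant policy $x(t)=x^*$ is feasible in \eqref{eq:trans1}-\eqref{eq:trans3} and attains objective $f^*$, which establishes that the optimal value of \eqref{eq:trans1}-\eqref{eq:trans3} is at most $f^*$. Second, I would show the reverse inequality: every feasible policy for \eqref{eq:trans1}-\eqref{eq:trans3} has limiting objective at least $f^*$. Combining the two bounds gives both claims of the lemma.

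The upper bound is routine. With $x(t)=x^*$ for all $t$, constraint \eqref{eq:trans3} holds trivially since $x^*\in\script{X}$; moreover $y_0(t)=f(x^*)=f^*$ and $y_k(t)=g_k(x^*)\leq c_k$ for every $k$ by feasibility of $x^*$ in the original convex program. Hence $\overline{y}_0(t)=f^*$ and $\overline{y}_k(t)\leq c_k$ for all $t>0$, so the constant policy is feasible in \eqref{eq:trans1}-\eqref{eq:trans3} and achieves the limiting objective $f^*$.

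For the lower bound, the key idea is to turn an arbitrary feasible policy for \eqref{eq:trans1}-\eqref{eq:trans3} into a single point of $\script{X}$ that is feasible for the original convex program. Let $\{x(t)\}$ be any feasible policy with $V^*=\limsup_{t\to\infty}\overline{y}_0(t)$. Part (b) of Lemma \ref{lem:jensen} furnishes $\overline{x}(t)\in\script{X}$ together with $f(\overline{x}(t))\leq \overline{y}_0(t)$ and $g_k(\overline{x}(t))\leq \overline{y}_k(t)$ for every $t>0$. I would then extract a subsequence $t_n\to\infty$ with $\overline{y}_0(t_n)\to V^*$ and, using compactness of $\script{X}$, pass to a further subsequence along which $\overline{x}(t_n)\to x^{\infty}$ for some $x^{\infty}\in\script{X}$. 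Continuity of $f$ and $g_k$ together with preservation of inequalities under $\limsup$ yields $f(x^{\infty})\leq V^*$ and $g_k(x^{\infty})\leq c_k$, so $x^{\infty}$ is feasible in \eqref{eq:cp1}-\eqref{eq:cp3}. Optimality of $x^*$ then gives $V^*\geq f(x^{\infty})\geq f^*$, as desired.

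The main obstacle is purely bookkeeping around the subsequence argument and the handling of $\limsup$, since the time-average constraints $\overline{y}_k(t)\leq c_k$ only hold in the $\limsup$ sense. Once the Jensen-based inequalities from Lemma \ref{lem:jensen} are in hand and $\overline{x}(t_n)\to x^{\infty}$ is secured via compactness, continuity does essentially all of the work and no further machinery is required.
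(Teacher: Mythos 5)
Your proposal is correct and follows essentially the same route as the paper: feasibility of the constant policy $x(t)=x^*$ for the upper bound, then Jensen's inequality (Lemma \ref{lem:jensen}b), Bolzano--Weierstrass on $\overline{x}(t)$ in the compact set $\script{X}$, and continuity of $f, g_k$ to show the limit point is feasible for \eqref{eq:cp1}-\eqref{eq:cp3}, yielding the lower bound. Your extra step of first passing to a subsequence along which $\overline{y}_0(t_n)$ attains its $\limsup$ is a minor (and slightly cleaner) bookkeeping variation, not a different argument.
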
 

\begin{proof} 
Recall that $f^*$ is defined as the optimal objective function value for \eqref{eq:cp1}-\eqref{eq:cp3}. 
Let $x^*$ be an optimal solution to \eqref{eq:cp1}-\eqref{eq:cp3}, so that $x^*\in \script{X}$, $g_k(x^*) \leq c_k$ for all $k \in \{1, \ldots, K\}$, and $f(x^*)=f^*$.  Define $x(t) = x^*$ for all $t$.  Then \eqref{eq:trans3} clearly holds.  Further, for all $t>0$ one has: 
\begin{eqnarray*}
\overline{y}_k(t) = \frac{1}{t}\sum_{\tau=0}^{t-1} g_k(x^*) = g_k(x^*) \leq c_k \: \: \forall k \in \{1, \ldots, K\} 
\end{eqnarray*}
and so the constraints \eqref{eq:trans2} hold.  Similarly, $\overline{y}_0(t) = f(x^*)=f^*$ for all $t$.  Thus,  $x(t)$ satisfies the constraints of problem \eqref{eq:trans1}-\eqref{eq:trans3} and gives an objective function value of $f^*$. It follows that $f^* \geq y^*_0$, where $y^*_0$ is defined as the infimum objective function value over all $x(t)$ functions that meet the constraints of problem \eqref{eq:trans1}-\eqref{eq:trans3}. 

It remains to show that $f^* \leq y^*_0$ (so that $f^*=y^*_0$). 
To this end, let $x(t)$ be any (possibly random) process that satisfies the constraints of problem \eqref{eq:trans1}-\eqref{eq:trans3}. Since $x(t) \in \script{X}$ for all $t$, it follows that $\overline{x}(t) \in \script{X}$ for all $t$. Since $\script{X}$ is compact, the Bolzano-Wierstrass theorem implies there is a subsequence of times $t_m$ that increase to infinity such that:  
\begin{equation} \label{eq:limit} 
 \lim_{m\rightarrow\infty} \overline{x}(t_m) = \hat{x} 
 \end{equation} 
for some fixed vector $\hat{x}=(\hat{x}_1, \ldots, \hat{x}_N)  \in \script{X}$. 
Furthermore, Jensen's inequality (specifically, Lemma \ref{lem:jensen}b)  implies that for any time $t_m>0$: 
\begin{eqnarray}
f(\overline{x}(t_m)) &\leq& \overline{y}_0(t_m)  \label{eq:jensen2}  \\
g_k(\overline{x}(t_m)) &\leq& \overline{y}_k(t_m)  \: \: \forall k \in \{1, \ldots, K\}  \label{eq:jensen1}  
\end{eqnarray}
Therefore, by continuity of $g_k(x)$:  
\begin{eqnarray}
g_k(\hat{x}) &=& \lim_{m\rightarrow\infty} g_k(\overline{x}(t_m))  \label{eq:first} \\
&\leq&  \lim_{m\rightarrow\infty} \overline{y}_k(t_m) \label{eq:second} \\
&\leq& \limsup_{t\rightarrow\infty} \overline{y}_k(t) \nonumber \\
&\leq& c_k \label{eq:last} 
 \end{eqnarray}
 where \eqref{eq:first} holds by \eqref{eq:limit}, \eqref{eq:second} holds by \eqref{eq:jensen1}, and 
 \eqref{eq:last} holds because \eqref{eq:trans2} is satisfied.  Thus, $\hat{x}$ satisfies the constraints of problem \eqref{eq:cp1}-\eqref{eq:cp3}.  It follows that $f(\hat{x}) \geq f^*$, and so: 
  \begin{eqnarray}
 f^* &\leq&   f(\hat{x}) \nonumber\\
 &=& \lim_{m\rightarrow\infty} f(\overline{x}(t_m)) \label{eq:continuity} \\
  &\leq& \lim_{m\rightarrow\infty} \overline{y}_0(t_m) \label{eq:first-ineq} \\
  &\leq& \limsup_{t\rightarrow\infty} \overline{y}_0(t) \nonumber 
  \end{eqnarray}
  where \eqref{eq:continuity} holds by 
\eqref{eq:limit} and continuity of $f(x)$, and   
  \eqref{eq:first-ineq} holds by \eqref{eq:jensen2}. 
 Thus: 
 \[ f^* \leq \limsup_{t\rightarrow\infty} \overline{y}_0(t) \]
 This says that $f^*$ is less than or equal to the objective function value for any random process $x(t)$ that satisfies the constraints of problem \eqref{eq:trans1}-\eqref{eq:trans3}.  It follows that $f^* \leq y^*_0$. 
 \end{proof} 
 
 \subsection{Drift-plus-penalty for convex programs} 

The drift-plus-penalty algorithm to solve \eqref{eq:trans1}-\eqref{eq:trans3} defines virtual queues $Q_k(t)$ for $k \in \{1, \ldots, K\}$ by: 
\[ Q_k(t+1) = \max[Q_k(t) + y_k(t) - c_k, 0] \] 
Since $y_k(t) = g_k(x(t))$, this is equivalent to: 
\begin{equation} \label{eq:cp-q} 
Q_k(t+1) = \max[Q_k(t) + g_k(x(t)) - c_k, 0] 
\end{equation} 
The queues are initialized to zero.  Then every slot $t \in \{0, 1, 2, \ldots\}$: 
\begin{itemize} 
\item Observe $(Q_1(t), \ldots, Q_K(t))$ and choose $x(t) \in \script{X}$ to minimize: 
\begin{equation} \label{eq:cp-to-min} 
 Vf(x(t)) + \sum_{k=1}^KQ_k(t) g_k(x(t)) 
 \end{equation} 
\item Update $Q_k(t)$ via \eqref{eq:cp-q} for each $k \in \{1, \ldots, K\}$. 
\end{itemize} 
 
 Fix $\epsilon>0$. 
The next subsection shows that by defining $V=1/\epsilon$, the average of values 
$\frac{1}{t}\sum_{\tau=0}^{t-1} x(\tau)$ obtained from the above algorithm converges to an $O(\epsilon)$ approximation of \eqref{eq:cp1}-\eqref{eq:cp3} with convergence time $O(1/\epsilon^2)$.  The above drift-plus-penalty algorithm in this special case of a (deterministic) convex program is similar to the basic  \emph{dual subgradient algorithm} with step size $1/V$ (see, for example, \cite{bertsekas-convex}). However, a traditional analysis of the dual subgradient algorithm relies on \emph{strict convexity} assumptions to ensure that the primal values $x(t)$ converge to a $O(\epsilon)$-approximation of a (unique) optimal solution $x^*$.  The above requires only convexity (not strict convexity), and so there may be more than one optimal solution to \eqref{eq:cp1}-\eqref{eq:cp3}.  It then takes a time average of the primals to obtain an $O(\epsilon)$-approximation. 

 \subsection{Convex progam performance} 

There is no random event process $\omega(t)$ for this convex programming problem, and so the drift-plus-penalty algorithm makes purely deterministic decisions to minimize \eqref{eq:cp-to-min} every slot $t$.  Indeed, assume that if there are ties in the decision \eqref{eq:cp-to-min}, the tie is broken using some deterministic method.  The resulting sequence $\{x(t)\}_{t=0}^{\infty}$ is deterministic. 
It follows that all expectations in the analysis of the previous section can be removed.\footnote{Alternatively, one can repeat the same analysis of the previous section in the special case of no randomness, redefining $\overline{x}(t)$ and $\overline{y}_k(t)$ to be pure time averages without an expectation,  to obtain the same results for this deterministic convex program.} 
Thus, for all $t>0$: 
\begin{eqnarray*}
\overline{y}(t) &=& \frac{1}{t}\sum_{\tau=0}^{t-1}y(\tau) \\
\overline{x}(t) &=& \frac{1}{t}\sum_{\tau=0}^{t-1} x(\tau)
\end{eqnarray*}

For this convex programming problem, the Lagrange multiplier condition \eqref{eq:exist-lm} reduces to the existence of a vector $(\mu_1, \ldots, \mu_K)$ with non-negative components such that: 
\[ f(x) + \sum_{k=1}^K \mu_k g_k(x) \geq f(x^*) + \sum_{k=1}^K\mu_k c_k \: \: \forall x \in \script{X} \]

Fix $\epsilon>0$. 
It follows by Theorem \ref{thm:performance} that if the problem is feasible and has a Lagrange multiplier vector, then the drift-plus-penalty method with $V=1/\epsilon$ yields the following for all $t \geq 1/\epsilon^2$: 
\begin{eqnarray*}
\overline{y}_0(t) &\leq& f^* + O(\epsilon) \\
\overline{y}_k(t) &\leq& c_k + O(\epsilon) \: \: \forall k \in \{1, \ldots, K\} 
\end{eqnarray*}
On the other hand, it is clear by Lemma \ref{lem:jensen} (Jensen's inequality) that for all $t>0$: 
\begin{eqnarray*}
 f(\overline{x}(t)) &\leq&\overline{y}_0(t)  \\
g_k(\overline{x}(t)) &\leq& \overline{y}_k(t) \: \: \forall k \in \{1, \ldots, K\} 
\end{eqnarray*}
and hence $\overline{x}(t) \in \script{X}$ for all $t>0$, and: 
\begin{eqnarray*}
f(\overline{x}(t))  &\leq& f^* + O(\epsilon) \\
g_k(\overline{x}(t)) &\leq& c_k + O(\epsilon) \: \: \forall k \in \{1, \ldots, K\} 
\end{eqnarray*}
Thus, the drift-plus-penalty algorithm produces an $O(\epsilon)$ approximation to the convex program with convergence time $O(1/\epsilon^2)$. 

\subsection{Application to linear programs} 

Consider the special case of a \emph{linear program}, so that the $f(x)$ and $g_k(x)$ functions are linear and the set $\script{X}$ is replaced by a hyper-rectangle: 
\begin{eqnarray*}
\mbox{Minimize:} & \sum_{i=1}^N b_ix_i \\
\mbox{Subject to:} & \sum_{i=1}^N a_{ki} x_i \leq c_k \: \: \forall k \in \{1, \ldots, K\} \\
& x_{i,min}\leq x_i \leq x_{i,max} \: \: \forall i \in \{1, \ldots, N\} 
\end{eqnarray*}
where $x_{i,min}, x_{i,max}$, $b_i$, $a_{ki}$, and $c_k$ are given real numbers for all $i\in\{1, \ldots, N\}$ and $k \in \{1, \ldots, K\}$.  It is assumed that $x_{i,min} < x_{i,max}$ for all $i \in \{1, \ldots, N\}$. 
This fits the form of the convex program \eqref{eq:cp1}-\eqref{eq:cp3} via: 
\begin{eqnarray*}
f(x) &=& \sum_{i=1}^Nb_ix_i \\
g_k(x) &=& \sum_{i=1}^N a_{ki}x_i \\
\script{X} &=& \{x \in \mathbb{R}^N | x_{i,min} \leq x_i \leq x_{i,max} \: \: \forall i \in \{1, \ldots, N\} \} 
\end{eqnarray*}
The resulting drift-plus-penalty algorithm defines virtual queues: 
\begin{equation} \label{eq:q-update-linear} 
Q_k(t+1) = \max\left[Q_k(t) + \sum_{i=1}^N a_{ki}x_i(t) - c_k, 0\right] 
\end{equation} 
The queues are initialized to $0$.  Then every slot $t \in \{0, 1, 2, \ldots\}$, a vector $x(t) \in \script{X}$ is chosen to minimize: 
\[ V\sum_{i=1}^N b_i x_i + \sum_{k=1}^K Q_k(t)\left[\sum_{i=1}^Na_{ki}x_i(t)\right]  \]
This results in the following simple and separable optimization over each variable $x_i(t)$.  Every slot $t \in \{0, 1, 2, \ldots\}$: 
\begin{itemize}
\item Observe $Q_1(t), \ldots, Q_K(t)$.  For each $i \in \{1, \ldots, N\}$ choose: 
\[ x_i(t) = \left\{ \begin{array}{ll}
x_{i,max} &\mbox{ if $Vb_i + \sum_{k=1}^KQ_k(t)a_{ki} \leq 0$} \\
x_{i,min} & \mbox{ otherwise} 
\end{array}
\right. \]
\item Update $Q_k(t)$ for $k \in \{1, \ldots, K\}$ via \eqref{eq:q-update-linear}. 
\item Update $\overline{x}(t)$ via $\overline{x}(t+1) = \overline{x}(t)\frac{t}{t+1} + \frac{x(t)}{t+1}$. 
\end{itemize}

This algorithm always chooses $x_i(t)$ within the 2-element set $\{x_{i,min}, x_{i,max}\}$.  Thus, the $x(t)$ vectors themselves cannot converge to an approximate solution if the resulting solution is not a corner point on the hyper-rectangle $\script{X}$ (for example, optimality might require $x_1^* = (x_{1,min} + x_{1,max})/2$).  However, Theorem \ref{thm:performance} ensures the \emph{time averages} $\overline{x}(t)$ converge to 
an $O(\epsilon)$-approximation with convergence time $O(1/\epsilon^2)$. 

\section{Distributed optimization over a connected graph} 

Consider a directed graph with $N$ nodes.  Let $\script{N} = \{1, \ldots, N\}$ be the set of nodes.  Let $\script{L}$ be the set of all directed links.  Each node $n \in \script{N}$ has a vector of its own variables $x^{(n)} = (x_1^{(n)}, \ldots, x_{M_n}^{(n)}) \in \mathbb{R}^{M_n}$, where $M_n$ is a positive integer for each $n \in \script{N}$. In addition, there is a vector $\theta = (\theta_1, \ldots, \theta_G) \in \mathbb{R}^G$ of \emph{common variables} (for some positive integer $G$).  The goal is to solve the problem in a distributed way, so that each node makes decisions based only on information available from its neighbors.  The problem and approach in this section is a variation on the work in \cite{neely-dist-comp}. 

Each node $n \in \script{N}$ must choose variables $x^{(n)} \in \script{X}^{(n)}$, where $\script{X}^{(n)}$ is a convex and compact subset of $\mathbb{R}^{M_n}$.  In addition, the nodes must collectively choose $\theta \in \Theta$, where $\Theta$ is a convex and compact subset of $\mathbb{R}^G$.  The goal is to solve: 
\begin{eqnarray}
\mbox{Minimize:} & \sum_{n=1}^N f^{(n)}(x^{(n)}, \theta)  \label{eq:dist1} \\
\mbox{Subject to:} & g^{(n)}(x^{(n)}, \theta) \leq c^{(n)} \: \: \forall n \in \script{N} \label{eq:dist2}  \\
& x^{(n)} \in \script{X}^{(n)} \: \: \forall n \in \script{N} \label{eq:dist3} \\
& \theta \in \Theta \label{eq:dist4} 
\end{eqnarray}
where $f^{(n)}(x^{(n)}, \theta)$ and $g^{(n)}(x^{(n)}, \theta)$ are convex functions over $\script{X}^{(n)} \times \Theta$, defined for each $n \in \script{N}$.  

The goal is to solve this problem by making distributed decisions at each node.  The difficulty is that 
the $\theta$ variables must be chosen collectively.  The next subsection clarifies the challenges by specifying the drift-plus-penalty algorithm.  Subsection \ref{section:dist-implementation1} modifies the problem (without affecting optimality) to produce a distributed solution.

\subsection{The direct drift-plus-penalty approach} 

The problem \eqref{eq:dist1}-\eqref{eq:dist4} is a convex program.  The drift-plus-penalty method defines virtual queues $Q^{(n)}(t)$ for each $n \in \script{N}$ to enforce the constraints \eqref{eq:dist2}: 
\begin{equation} \label{eq:dist-q-update} 
Q^{(n)}(t+1) = \max[Q^{(n)}(t) + g^{(n)}(x^{(n)}(t), \theta(t)) - c^{(n)}, 0] \: \: \forall n \in \script{N} 
\end{equation} 
Every slot $t \in \{0, 1, 2, \ldots\}$, the algorithm chooses $x^{(n)}(t) \in \script{X}^{(n)}$ for all $n \in \script{N}$, and chooses $\theta(t) \in \Theta$ to minimize: 
\[ \sum_{n=1}^N Vf^{(n)}(x^{(n)}(t), \theta(t)) + \sum_{n=1}^N Q^{(n)}(t)g^{(n)}(x^{(n)}(t), \theta(t)) \]
The difficulty is the joint selection of the $\theta(t)$ variables, which couples all terms together in a centralized optimization. 

\subsection{A distributed approach}\label{section:dist-implementation1} 

This subsection specifies a distributed solution, along the lines of the general solution methodology from \cite{neely-dist-comp}.  The idea is to introduce \emph{estimation vectors} $\theta^{(n)}(t)\in\Theta$ at each node $n \in \script{N}$.  
Consider the following problem: 
\begin{eqnarray}
\mbox{Minimize:} & \sum_{n=1}^N f^{(n)}(x^{(n)}, \theta^{(n)}) \label{eq:new1} \\
\mbox{Subject to:} & g^{(n)}(x^{(n)}, \theta^{(n)}) \leq c^{(n)} \: \: \forall n \in \script{N} \label{eq:new2} \\
& \theta^{(n)} =  \theta^{(j)}  \: \: \forall (n,j) \in \script{L} \label{eq:new3} \\
& x^{(n)} \in \script{X}^{(n)} \: \: \forall n \in \script{N} \label{eq:new4} \\
& \theta^{(n)} \in \Theta \: \: \forall n \in \script{N} \label{eq:new5} 
\end{eqnarray}
The constraints \eqref{eq:new3} are \emph{vector equality} constraints.  Specifically, if $\theta^{(n)} = (\theta^{(n)}_1, \ldots, \theta^{(n)}_G)$, then the constraints are: 
\begin{equation} \label{eq:dude-yes} 
\theta_i^{(n)} = \theta_i^{(j)} \: \: \forall i \in \{1, \ldots, G\}, \forall (n,j) \in \script{L} 
\end{equation} 

Now assume that if one changes the directed graph to an undirected graph by changing all directed links to undirected links, then the resulting undirected graph is connected (so that there is a path from every node to every other node in the undirected graph).   With this connectedness assumption, the problem \eqref{eq:new1}-\eqref{eq:new5} is equivalent to the original problem \eqref{eq:dist1}-\eqref{eq:dist4}. 
That is because for any nodes $n$ and $m$ in $\script{N}$, there is a path in the undirected graph from $n$ to $m$, and the equality constraints \eqref{eq:new3} ensure that each node $j$ on this path has  $\theta^{(j)}=\theta^{(n)}$.  It follows that the constraints \eqref{eq:new3} ensure that  the estimation 
vectors $\theta^{(n)}$ are the same for all nodes $n \in \script{N}$. 

The problem \eqref{eq:new1}-\eqref{eq:new5} can be solved via the drift-plus-penalty framework of Section \ref{section:equality}.  For each inequality constraint \eqref{eq:new2} (that is, for each $n \in \script{N}$), define: 
\begin{equation} \label{eq:graph-dist-q-update} 
 Q^{(n)}(t+1) = \max[Q^{(n)}(t) + g^{(n)}(x^{(n)}(t), \theta^{(n)}(t)) - c^{(n)}, 0] 
 \end{equation} 
For each equality constraint \eqref{eq:dude-yes} (that is, for each $i \in \{1, \ldots, G\}$ and $(n,j)$ in $\script{L}$) define: 
\begin{equation} \label{eq:graph-dist-z-update} 
Z^{(n,j)}_i(t+1) = Z^{(n,j)}_i(t) + \theta^{(n)}_i(t) - \theta^{(j)}_i(t) 
\end{equation} 
Each node $n \in \script{N}$ is responsible for updating queues 
$Q^{(n)}(t)$ and $Z_i^{(n,j)}(t)$ for all $i \in \{1, \ldots, G\}$ and all  $j$ such that $(n,j) \in \script{L}$. Every slot $t$, decisions are made to minimize: 
\begin{align*}
&\sum_{n=1}^NVf^{(n)}(x^{(n)}(t), \theta^{(n)}(t)) \\
&+ \sum_{n\in\script{N}} Q^{(n)}(t)g^{(n)}(x^{(n)}(t), \theta^{(n)}(t))\\
&+ \sum_{i=1}^G\sum_{(n,j) \in \script{L}} Z^{(n,j)}_i(t)(\theta^{(n)}_i(t) - \theta^{(j)}_i(t)) 
\end{align*}

This is a separable optimization in each of the local variables $x^{(n)}(t)$ and $\theta^{(n)}(t)$ associated with individual nodes $n \in \script{N}$.  Each node $n \in \script{N}$ needs to know only its own internal queues and the queue values $Z_i^{(a,n)}(t)$ of its neighbors.  It is assumed that these values can be obtained via message passing on the links associated with each neighbor. 
The resulting algorithm is as follows:  Initialize all queues to $0$. 
Every slot $t \in \{0, 1, 2, \ldots\}$ do: 

\begin{itemize} 
\item Each node $n \in \script{N}$ observes $Q^{(n)}(t)$ and the queues $Z^{(n,j)}_i(t)$ and $Z^{(a,n)}_i(t)$ for all $(n,j) \in \script{L}$ and all $(a,n) \in \script{L}$, and all $i \in \{1, \ldots, G\}$. It then chooses $(x^{(n)}(t), \theta^{(n)}(t)) \in \script{X}^{(n)}\times \Theta$ to minimize: 
\begin{align*}
&Vf^{(n)}(x^{(n)}(t), \theta^{(n)}(t))  + Q^{(n)}(t)g^{(n)}(x^{(n)}(t), \theta^{(n)}(t))  \\
& + \sum_{i=1}^G\theta^{(n)}_i(t)\left[ \sum_{j | (n,j) \in \script{L}} Z^{(n,j)}_i(t)  - \sum_{a | (a,n) \in \script{L}} Z^{(a,n)}_i(t) \right]
\end{align*} 

\item Each node $n \in \script{N}$ updates $Q^{(n)}(t)$ via \eqref{eq:graph-dist-q-update} and 
updates $Z^{(n,j)}_i(t)$ for $(n,j) \in \script{L}$ via \eqref{eq:graph-dist-z-update}. The $Z^{(n,j)}_i(t)$ update for node $n$ requires all neighbors $j$ such that $(n,j) \in \script{L}$ to first pass their chosen $\theta^{(j)}(t)$ vectors to node $n$, so that the right-hand-side of \eqref{eq:graph-dist-z-update} can be computed. 
\end{itemize} 

Fix $\epsilon>0$. 
Using $V = 1/\epsilon$, the resulting time averages $\overline{x}^{(n)}(t)$ and $\overline{\theta}^{(n)}(t)$ 
converge to an $O(\epsilon)$ approximation with convergence time $O(1/\epsilon^2)$. 

\subsection{A different type of constraint}

The problem \eqref{eq:new1}-\eqref{eq:new5} specifies one constraint of the form  
$g^{(n)}(x^{(n)}, \theta) \leq c^{(n)}$ for each node $n \in \script{N}$.  Suppose the problem is changed so that these constraints \eqref{eq:new2} are replaced by a single constraint of the form: 
\begin{equation} \label{eq:hypothetical} 
 \sum_{n\in\script{N}} g^{(n)}(x^{(n)}, \theta^{(n)}) \leq c 
 \end{equation} 
 for some given real number $c$. 
In principle, this could be treated using a virtual queue: 
\[ J(t+1) = \max\left[J(t) + \sum_{n\in\script{N}} g^{(n)}(x^{(n)}(t), \theta^{(n)}(t)) - c, 0\right]  \]
However, it is not clear \emph{which node should implement this queue}.  Further, every slot $t$,  that node would need to know values of $g^{(n)}(x^{(n)}(t), \theta^{(n)}(t))$ for all nodes $n \in \script{N}$, which is difficult in a distributed context. 

One way to avoid this difficulty is as follows:  Form new variables $x^{(n,m)} \in \script{X}^{(n)}$ for all $n, m \in \script{N}$.  The variable $x^{(n,m)}$ can be interpreted as the node $m$ estimate of the optimal value of $x^{(n)}$.  The constraint \eqref{eq:hypothetical} is then replaced by: 
\begin{align}
&\sum_{n \in \script{N}} g^{(n)}(x^{(n,1)}, \theta^{(1)}) \leq c \label{eq:weird1}  \\
&x^{(n,m)} = x^{(n,j)} \: \: \forall n \in \script{N}, \forall (m,j) \in \script{L} \label{eq:weird2} \\
& x^{(n,m)} \in \script{X}^{(n)} \: \: \forall n \in \script{N} \label{eq:weird3}  
\end{align}
Node 1 is responsible for the constraint \eqref{eq:weird1} and maintains a virtual queue: 
\[ J(t+1) = \max\left[J(t) + \sum_{n\in\script{N}} g^{(n)}(x^{(n,1)}(t), \theta^{(1)}(t)) - c, 0\right] \]
Each node $m \in \script{N}$ is responsible for the vector equality 
constraints $x^{(n,m)}=x^{(n,j)}$ for all $n \in \script{N}$ and all $(m,j) \in \script{L}$. These are enforced in the same manner as the constraints \eqref{eq:new3}. 

\section{Conclusions} 

This paper proves $O(1/\epsilon^2)$ convergence time for the drift-plus-penalty algorithm in a general situation where a Lagrange multiplier vector exists, without requiring a Slater condition.  This holds for both stochastic optimization problems and for (deterministic) convex programs.  Special case implementations were given for convex programs, including linear programs.  Example solutions were also presented for solving convex programs in a distributed way over a connected graph.

\bibliographystyle{unsrt}
\bibliography{../../latex-mit/bibliography/refs}
\end{document}